     \def\section{\@startsection{section}{1}%
     \z@{.7\linespacing\@plus\linespacing}{.5\linespacing}%
     {\bfseries
     \centering
     }}
     \def\@secnumfont{\bfseries}
\newcommand{\R}{\mathbb R}
\newcommand{\RR}{\mathbb R}
\newcommand{\E}{\mathbb E}
\newcommand{\ep}{\varepsilon}
\newtheorem{theorem}{Theorem}[section]
\newtheorem{lemma}[theorem]{Lemma}
\theoremstyle{definition}
\newtheorem{definition}[theorem]{Definition}
\theoremstyle{remark}
\numberwithin{equation}{section}
\begin{document}

\title[Asymptotic properties]{Asymptotic properties of the stochastic heat equation in large times}

 \date{\today}

\author{Arturo Kohatsu-Higa* \and David Nualart}

\address{Corresponding Author: Arturo Kohatsu-Higa: 
	Department of Mathematical Sciences Ritsumeikan University 1-1-1 Nojihigashi, Kusatsu, Shiga, 525-8577, Japan.}
\email{khts00@fc.ritsumei.ac.jp}
\thanks{The research of the second author was supported by KAKENHI grants 16K05215 and 16H03642.}

\address{David Nualart: University of Kansas, Mathematics department, Snow Hall, 1460 Jayhawk blvd, Lawrence, KS 66045-7594, United States} \email{nualart@ku.edu} \thanks{D. Nualart is supported by NSF Grant DMS 1811181.}

\begin{abstract}
 In this article, we study the asymptotic behavior of the stochastic heat equation for large times.

\end{abstract}
 \subjclass[2010]{Primary:60H15 }
 \keywords{stochastic heat equation, asymptotic behavior, large times}
 \maketitle

  \section{Introduction and main results}

Suppose that $W=\{W(t,x), t\ge 0, x\in \RR\}$ is a two-parameter Wiener process.  That is, $W$ is a zero-mean Gaussian process with covariance function given by
\[
\E (W(t,x) W(s,y) ) = (s\wedge t) ( |x| \wedge |y|)\mathbf{1}_{\{xy >0\}}.
\]
Consider the stochastic heat equation
\begin{equation} \label{ecu1}
\frac {\partial u }{\partial t} = \frac 12 \frac {\partial ^2u }{\partial x^2} + \varphi(W(t,x))  \frac { \partial ^2 W} {\partial t\partial x}, \quad x\in \RR, t\ge 0,
\end{equation}
where $\varphi: \RR \rightarrow \RR$ is a given Borel  measurable  function such that for each $t \ge 0$ and $x\in \RR$,
\begin{equation} \label{ecu2}
\int_0^t \int_{\RR^2}  p^2_{t-s} (x-y) \varphi^2(z) p_{s|y|} (z) dydzds <\infty.
\end{equation}
Along the paper $p_t(x)$ denotes the  one-dimensional heat kernel, that is, $p_t(x)= (2\pi t)^{-1/2} e^{ -x^2/2t}$ for $t>0$ and $x\in \RR$.  The mild solution to equation (\ref{ecu1}) with initial condition $ u(0,x)=0 $ is given by
\[
u(t,x)= \int_0^t \int_{\RR}  p_{t-s} (x-y)  \varphi(W(s,y)) W(ds,dy),
\]
where the stochastic integral is well defined in view of condition (\ref{ecu2}).

We are interested in the asymptotic behavior as $t\rightarrow \infty$  of $u(t,x)$ for $x\in \RR$ fixed.  Notice first that in the particular case where $\varphi(x)\equiv c$, then $u(t,x)$ is a centered  Gaussian random variable with variance
\[
\E(u(t,x)^2)= c^2 \int_0^t \int_{\RR}  p^2_{t-s} (x-y) dyds = c^2 \int_0^t  p_{2(t-s)} (0) ds = \frac {c^2}{\sqrt{\pi} }    \sqrt{t}.
\]
Therefore $t^{-\frac 14} u(t,x)$ has the law $N(0, c^2/ \sqrt{\pi})$.
In the general case, using the change of variables $s \rightarrow ts$  and $y \rightarrow \sqrt{t }y$, we can write
\begin{align}
u(t,x)&=   \int_0^1 \int_{\RR}  p_{t(1-s)} (x-y)  \varphi(W(ts,y)) W(tds,dy) \nonumber\\
&= \frac 1{ \sqrt{t}}   \int_0^1 \int_{\RR}  p_{1-s} (\frac {x-y}{\sqrt{t}})  \varphi(W(ts,y)) W(tds,dy) \nonumber \\
&= \frac 1{ \sqrt{t}}   \int_0^1 \int_{\RR}  p_{1-s} (\frac {x}{\sqrt{t}} -y )  \varphi(W(ts, \sqrt{t} y)) W(tds,\sqrt{t}dy).
\label{eq:1}
\end{align}
By the scaling properties of the two-parameter Wiener process it follows that $u(t,x)$ has the same law as
\begin{equation} \label{ecu3}
\widetilde{u}(t,x)= t^{1/4}    \int_0^1 \int_{\RR}  p_{1-s} (\frac {x}{\sqrt{t}} -y )  \varphi( t^{3/4} W(s, y)) W(ds,dy).
\end{equation}
The asymptotic behavior of $u(t,x)$ will depend on the properties of the function $\varphi$. We will consider three  classes of functions for  which different behaviors appear.
We are going to use the following notion of convergence, which is stronger that the convergence in distribution (see, for instance,  \cite[Chapter 4]{JacSh}).
\begin{definition} \label{d:stable} 
  Let $\{F_{n}\}$ be a sequence of random variables  defined on a probability space $(\Omega, \mathcal{F}, \mathbb{P})$. Let $F$ be a random variable defined on some extended probability space $(\Omega', \mathcal{F}', \mathbb{P}')$.  We say that $F_{n}$ {\it converges  stably} to $F$, written $F_n \stackrel {\rm stably} {\longrightarrow} F$, if
\begin{equation}
\underset{n \rightarrow \infty}{\rm lim}\mathbb{E}\left[Ge^{i \lambda F_{n}} \right] = \mathbb{E}'\left[Ge^{i \lambda  F} \right],\label{e:stable}
\end{equation}
for every $\lambda \in \R$ and every bounded $\mathcal{F}$--measurable random variable $G$. 
\end{definition}

The first theorem deals with the case where $\varphi$ is an homogeneous type function. 

 \begin{theorem}
 	\label{thm1b}
 Suppose that $ \varphi:\R\rightarrow \R$,   is a measurable and bounded on compacts function such that $\lim _{x  \rightarrow \pm\infty} |x|^{-\alpha}\varphi(x) =c_\pm$  for some constants $c_+, c_-$ and $ \alpha\geq 0 $.  Then,  as $t\rightarrow \infty$, 
 \begin{align*}
 & t^{-\frac {3\alpha +1}4} u(t,x)   \stackrel  {\rm stably}  \longrightarrow   c_- \int_0^1 \int_{\RR}  p_{1-s} (y ) |\widehat{W}(s, y)|^\alpha \mathbf{1}_{\{\widehat{W}(s,y) <0\}}  \widehat{W}(ds,dy)\\
 & \qquad \qquad \qquad \qquad  +c_+\int_0^1 \int_{\RR}  p_{1-s} (y ) |\widehat{W}(s, y)|^\alpha \mathbf{1}_{\{\widehat{W}(s,y) >0\}}  \widehat{W}(ds,dy)=:X.
 \end{align*}
Here  $\widehat{W}$ is a two-parameter Wiener process independent of $W$.
 \end{theorem}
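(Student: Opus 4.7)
My plan is to first identify the limit distribution of $X_t := t^{-(3\alpha+1)/4} u(t,x)$ via the scaling identity \eqref{ecu3} and a dominated-convergence argument in $L^{2}(\Omega)$, and then to promote the convergence in distribution so obtained to stable convergence by showing asymptotic independence of $X_t$ from every fixed $\mathcal{F}^{W}_T$.

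Starting from \eqref{ecu3} and setting $\varphi_t(z):=t^{-3\alpha/4}\varphi(t^{3/4}z)$, the scaled process reads
\[
t^{-\frac{3\alpha+1}{4}}\widetilde u(t,x) = \int_0^1\!\int_\RR p_{1-s}\!\left(\tfrac{x}{\sqrt t}-y\right)\varphi_t\bigl(W(s,y)\bigr)\,W(ds,dy).
\]
The hypotheses on $\varphi$ yield both the pointwise limit $\varphi_t(z)\to\psi(z):=c_{+}z^{\alpha}\mathbf{1}_{\{z>0\}}+c_{-}(-z)^{\alpha}\mathbf{1}_{\{z<0\}}$ for $z\neq 0$ and a uniform envelope $|\varphi_t(z)|\le C(1+|z|^{\alpha})$ for $t\ge 1$. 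Combining these with the It\^o isometry and dominated convergence gives
\[
\int_0^1\!\int_\RR p_{1-s}\!\left(\tfrac{x}{\sqrt t}-y\right)\varphi_t(W(s,y))\,W(ds,dy) \longrightarrow \int_0^1\!\int_\RR p_{1-s}(y)\,\psi(W(s,y))\,W(ds,dy)
\]
in $L^{2}(\Omega)$; the required integrability is checked through $\E[|W(s,y)|^{2\alpha}]=C_\alpha (s|y|)^{\alpha}$ together with a routine heat-kernel computation. Since $u(t,x)\stackrel{d}{=}\widetilde u(t,x)$, and since the right-hand side is in law equal to $X$ upon replacing $W$ by the independent copy $\widehat W$, this already establishes $X_t \to X$ in distribution.

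To upgrade to stable convergence I would work directly with the unscaled $u(t,x)$ and test against bounded $G$ measurable with respect to $W|_{[0,T]\times[-R,R]}$ for fixed $T,R>0$ (such $G$'s form a dense subclass of bounded $\mathcal F$-measurable functions). Splitting $\int_0^t = \int_0^T + \int_T^t$ in $u(t,x)$ and controlling the first piece in $L^{2}$ reduces the problem to the second piece after dividing by $t^{(3\alpha+1)/4}$. In that remaining integral I would replace each occurrence of $\varphi(W(s,y))$ by $\varphi(W(s,y)-W(T,y))$: indeed the characteristic scale $|W(T,y)|\sim T^{1/2} t^{1/4}$ is of strictly lower order than $|W(s,y)|\sim t^{3/4}$ for $s$ of order $t$ and $y$ of order $\sqrt t$, so the associated error is $o(1)$ in $L^{2}$ after normalization. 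The approximating integral is measurable with respect to $\sigma\{W(s,y)-W(T,y):s\ge T,\,y\in\RR\}$, which is independent of $\mathcal F^{W}_T\supset\sigma(G)$. Combining this independence with the convergence in distribution from the previous step yields $\E[G\,e^{i\lambda X_t}]\to \E[G]\,\E[e^{i\lambda X}]$, which by Definition~\ref{d:stable} is exactly the stable convergence asserted in the theorem.

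The main difficulty is the uniform (in $t$) control of the replacement $\varphi(W(s,y))\mapsto\varphi(W(s,y)-W(T,y))$: since $\varphi$ is only measurable and bounded on compacts (with the prescribed asymptotic homogeneity), no local regularity of $\varphi$ can be exploited, and the entire estimate must be driven by the polynomial envelope $|\varphi(\cdot)|\le C(1+|\cdot|^{\alpha})$, the probabilistic smallness of $W(T,y)/W(s,y)$, and the It\^o isometry, combined with a careful splitting into the ranges where $\varphi$ is controlled by its asymptotics and where it is controlled by its boundedness on compacts. All other steps are routine Gaussian computations with the heat kernel.
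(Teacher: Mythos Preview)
Your proposal is correct and follows essentially the same route as the paper: identify the limit via the scaling identity \eqref{ecu3} plus dominated convergence in $L^2$, then upgrade to stable convergence by splitting the time integral at a fixed $T$, discarding the $[0,T]$ piece, and using that the increment field $\widehat W(s,y)=W(s,y)-W(T,y)$ is independent of $\mathcal F_T$.

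The only organisational difference is in how the perturbation $W(T,y)$ is handled. You propose to \emph{replace} $\varphi(W(s,y))$ by $\varphi(\widehat W(s-T,y))$ and bound the resulting $L^2$ error, which forces you to control a difference of values of the merely measurable function $\varphi$ at nearby points; you rightly flag this as the main difficulty, and your plan (envelope $|\varphi_t(z)|\le C(1+|z|^\alpha)$, splitting according to whether $\varphi$ is governed by its asymptotics or by its local bound) is exactly what is needed. The paper sidesteps this difference-estimate by first reducing to $\varphi(x)=f(x)|x|^\alpha$ with $f$ bounded and $f(\pm\infty)=c_\pm$ (your ``splitting into ranges'' in disguise), then computing the conditional characteristic function $B_t=\widehat\E[\exp(i\lambda\cdot\int\ldots)]$ with $W(T,y)$ \emph{kept} inside the argument, and passing to the limit in the integrand after rescaling: since the argument of $f$ is driven to $\pm\infty$ while the perturbation $(t-T)^{-3/4}W(T,\sqrt{t-T}\,y)$ is driven to $0$, the integrand converges to $F_\alpha(\widehat W(s,y))|\widehat W(s,y)|^\alpha$ directly. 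This avoids ever writing $\varphi(A+B)-\varphi(A)$, but the analytic content is the same as your replacement step. Either way works; the paper's packaging is marginally cleaner because it never needs to bound a difference of $\varphi$-values.
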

 
 Note that in the case that $ c_+=c_-$ and $ \alpha=0 $ then the limit is Gaussian. Note that one may also consider the case 
 $\lim _{x  \rightarrow -\infty} |x|^{\pm\alpha_\pm}\varphi(x) =c_\pm$ for some constants $c_+, c_-$, $ \alpha_+,\alpha_-\geq 0 $. In this case the renormalization factor is $ t^{-\frac {3(\alpha_+\vee\alpha_- )+1}4} $ and the limit will only have contributions from the largest $ \alpha_i=\alpha_+\vee\alpha_- $.
 
 In the second theorem we consider the case were $\varphi$ satisfies some integrability properties with respect to the Lebesgue measure on $\RR$.
 The limit involves a weighted local time of the two-parameter Wiener process and the proof has been inspired by the work of Nualart and Xu  \cite{NuXu} on the central limit theorem for an additive functional of the fractional Brownian motion.

 \begin{theorem}  \label{thm2} Suppose that $\varphi \in L^2(\RR) \cap L^p(\RR)$ for some $p<2$.  Then,  as $t\rightarrow \infty$, 
\[
t^{\frac 18} u(t,x)   \stackrel  {\rm stably}  \longrightarrow   Z \left( \int_0^1 \int_{\RR}  p^2_{1-s} (y ) \delta_0(\widehat{W}(s, y)) dyds \right)^{\frac 12} \| \varphi \|_{L^2(\RR)} ,
\]
where   $\widehat{W}$ is a two-parameter Wiener process, $Z$  is a $N(0,1)$ random variable and  $\widehat{W}$, $W$ and $Z$ are independent.
 \end{theorem}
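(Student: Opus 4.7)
The plan is to combine the scaling identity \eqref{ecu3} with the martingale structure of the rescaled stochastic integral and then apply a stable central limit theorem for continuous martingales. By \eqref{ecu3}, $t^{1/8}u(t,x)$ has the same law as $t^{3/8} M^1_t$, where
\[
M^s_t := \int_0^s \int_\R p_{1-r}\!\Bigl(\tfrac{x}{\sqrt{t}}-y\Bigr)\,\varphi\bigl(t^{3/4}W(r,y)\bigr)\,W(dr,dy), \qquad s\in[0,1],
\]
is a continuous $\mathcal{F}^W_s$-martingale whose quadratic variation at $s=1$ equals
\[
Q_t := t^{3/4}\int_0^1\int_\R p^2_{1-s}\!\Bigl(\tfrac{x}{\sqrt{t}}-y\Bigr)\,\varphi^2\bigl(t^{3/4}W(s,y)\bigr)\,dy\,ds.
\]

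The central step is to prove that, as $t\to\infty$,
\[
Q_t \longrightarrow \|\varphi\|_{L^2(\R)}^2\int_0^1\int_\R p^2_{1-s}(y)\,\delta_0\bigl(W(s,y)\bigr)\,dy\,ds =: Q_\infty \quad\text{in } L^1(\Omega).
\]
I would proceed as follows: (i) replace $p^2_{1-s}(x/\sqrt{t}-y)$ by $p^2_{1-s}(y)$ via dominated convergence, yielding a negligible error; (ii) note that for each fixed $s>0$, the map $y\mapsto W(s,y)/\sqrt{s}$ is a two-sided standard Brownian motion (as follows from the covariance $\mathbb{E}[W(s,y)W(s,y')]=s(|y|\wedge|y'|)\mathbf{1}_{yy'>0}$), so it admits a local time at the origin; (iii) exploit the approximate-identity limit $t^{3/4}\varphi^2(t^{3/4}\cdot)\rightharpoonup\|\varphi\|_{L^2}^2\,\delta_0$ together with the occupation-times formula to identify the limit as a weighted local-time integral. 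The hypothesis $\varphi\in L^p(\R)$ for some $p<2$ (together with $L^2$) supplies the uniform integrability needed to upgrade these pointwise and weak convergences to $L^1$ convergence; heuristically, it controls tail contributions of $\varphi^2$ uniformly in $t$.

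With $Q_t\to Q_\infty$ in $L^1$ in hand, I would invoke the stable convergence theorem for continuous martingales (e.g.\ Theorem IX.7.3 in Jacod--Shiryaev). The continuity of $s\mapsto t^{3/8}M^{s}_t$ makes the Lindeberg condition automatic, so convergence of the quadratic variation gives
\[
t^{3/8}M^1_t \stackrel{\rm stably}{\longrightarrow} \sqrt{Q_\infty}\,Z,
\]
with $Z\sim N(0,1)$ independent of $\sigma(W)$, which is the random variable in the theorem (with $\widehat{W}$ serving to realize the local-time functional on the extended probability space as an independent copy). The \emph{main obstacle} is the $L^1$-convergence of $Q_t$: the integrand $\varphi^2(t^{3/4}W(s,y))$ concentrates on the random zero set of $W$, so the $\delta$-function limit must be justified by delicate second-moment estimates, likely via a Fourier decomposition of $\varphi^2$ and joint-Gaussian moment calculations in the spirit of Nualart--Xu~\cite{NuXu}, while carefully handling the singularity of $p^2_{1-s}(y)$ as $s\to 1$.
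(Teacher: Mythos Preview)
Your overall strategy---rescale via \eqref{ecu3}, view the result as a continuous martingale in $s\in[0,1]$, and identify the limit of the quadratic variation as a weighted local time---matches the paper. The sketch of how $Q_t\to Q_\infty$ should go (Fourier methods in the spirit of \cite{NuXu}, handling the singularity of $p^2_{1-s}$) is also in line with the paper's Step~1 and Lemma~\ref{lem1}. But two genuine gaps remain.

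\medskip
\textbf{The asymptotic orthogonality condition is missing, and the role of $L^p$ is misidentified.} You write that ``convergence of the quadratic variation gives $t^{3/8}M^1_t \stackrel{\rm stably}{\longrightarrow} \sqrt{Q_\infty}\,Z$ with $Z$ independent of $\sigma(W)$''. This is not what the martingale stable-CLT provides: convergence of $\langle M_t\rangle$ alone yields only convergence in law to a mixed Gaussian. To obtain \emph{stable} convergence with the Gaussian factor $Z$ independent of the driving noise, one must also verify that the cross-variations $\langle t^{3/8}M_t^{\,\cdot}, N\rangle_r \to 0$ for a family of martingales $N$ generating the filtration of $W$. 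This is the paper's Step~2, carried out for $N=M^i(r)=\int_0^r\int_\R e_i(y)\,W(ds,dy)$ with $\{e_i\}$ a bounded orthonormal basis of $L^2(\R)$, and it is \emph{exactly} where the hypothesis $\varphi\in L^p(\R)$ for some $p<2$ is used: the estimate
\[
\E\bigl|\varphi(t^{3/4}W(s,y))\bigr|\;\le\;\|\varphi\|_{L^p(\R)}\,\bigl(t^{3/2}s|y|\bigr)^{-\frac{1}{2p}}
\]
produces the factor $t^{3/8-3/(4p)}$, which tends to zero precisely when $p<2$. By contrast, the quadratic-variation convergence $Q_t\to Q_\infty$ in the paper's Step~1 uses only $\varphi\in L^2(\R)$; your attribution of the $L^p$ hypothesis to ``uniform integrability needed to upgrade \dots\ to $L^1$ convergence'' is incorrect.

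\medskip
\textbf{Stable convergence does not transfer through equality in law.} The identity \eqref{ecu3} gives $u(t,x)\stackrel{d}{=}\widetilde u(t,x)$, not $u(t,x)=\widetilde u(t,x)$. Even if you establish that $t^{1/8}\widetilde u(t,x)$ converges stably (with respect to the $\sigma$-field of the $W$ appearing in $\widetilde u$), this does \emph{not} imply stable convergence of $t^{1/8}u(t,x)$ with respect to the original filtration $\{\mathcal F_t\}$ of the $W$ defining $u$; only convergence in distribution transfers. The paper closes this gap in a separate Step~4 by computing $\E[e^{i\lambda t^{1/8}u(t,x)}\mid\mathcal F_{t_0}]$ directly for fixed $t_0$ and letting $t\to\infty$, as in the proof of Theorem~\ref{thm1b}. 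Your proposal does not address this point.
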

  
 In Theorem \ref{thm2},   $L_{0,1}:=\int_0^1 \int_{\RR}  p^2_{1-s} (y ) \delta_0(\widehat{W}(s, y)) dyds$ is a weighted local time of the random field   $\widehat{W}$, that can be defined  (see Lemma \ref{lem1} below) as the limit in $L^2(\Omega)$ of 
 \[
 L^\varepsilon_{0,1}:=\int_0^1 \int_{\RR}  p^2_{1-s} (y ) p_\varepsilon(\widehat{W}(s, y)) dyds,
 \]
 as  $\ep$ tends to zero.

 The paper  is organized as follows. Section 2 contains the proofs of the above theorems and in Section 3 we discuss an extension of these results in the case where we consider also space averages on an interval $[-R,R]$ and both $R$ and $t$ tend to infinity. A brief discussion of the case of nonlinear equations is provided in Section 4.
 
 \section{Proofs}
 \noindent
 {\it Proof of Theorem \ref{thm1b}:}  We know that $  t^{-\frac { 3\alpha +1}4}    u(t,x)$ has the same law as
 \[
 v(t,x ):=  t^{-\frac { 3\alpha }4}  \int_0^1 \int_{\RR}  p_{1-s} (\frac {x}{\sqrt{t}} -y ) \varphi(t^{3/4}W(s,y)) W(ds,dy).
 \]
% Therefore, we only have to show  the stable convergence, with the limit defined as the stochastic integral with respect to an independent two-parameter Wiener process.
 
 We divide the study of $ v $ into two parts according to the boundedness on compacts property for $ \varphi $. In fact, for any compact $ K $ consider $ \varphi_K(x)= \varphi(x)\mathbf{1}_{x\in K}$. Then we will prove that $ v_K(x)\to 0 $ in $ L^2(\Omega) $ where
 \begin{align*}
 v_K(t,x ):=  t^{-\frac { 3\alpha }4}  \int_0^1 \int_{\RR}  p_{1-s} (\frac {x}{\sqrt{t}} -y ) \varphi_K(t^{3/4}W(s,y)) W(ds,dy).
 \end{align*}
 
 In fact, as $ \varphi_K $ is bounded by a constant, say $ M $, we have 
 \begin{align*}
 \E[v^2_K(t,x)]\leq M^2t^{-\frac{3\alpha}2}\int_0^1 \frac 1 { \sqrt{2\pi(1-s)}} \int_{\RR}  p_{1-s} (\frac {x}{\sqrt{t}} -y ) \mathbb{P}(t^{3/4}W(s,y)\in K) dsdy.
 \end{align*}
 The above quantity clearly converges to zero  as $t\rightarrow \infty$, if one considers separately the cases $ \alpha>0 $ and $ \alpha=0 $.
 
 Given the above result, we can assume without loss of generality that $ \varphi(x)=f(x)|x|^\alpha $ with a bounded measurable function $ f $ such that $ \lim_{x\to\pm\infty}f(x)=c_\pm $.

 For this,   we fix $t_0>0$ and compute the conditional characteristic function of $t^{-\frac { 3\alpha +1}4}    u(t,x)$ given $\mathcal{F}_{t_0}$,
 where $ t>t_0 $ and $\{ \mathcal{F}_t, t\ge 0\}$ denotes the natural filtration of the two-parameter Wiener process $W$ used in the definition of $ u $\footnote{That is, $ \mathcal{F}_t $ is generated by $ W(s,x), s\leq t, x\in \R $.} . For any $\lambda \in \R$, we have
 \begin{align*}
 \E \left[ e^{i\lambda  t^{-\frac { 3\alpha +1}4}    u(t,x) } | \mathcal{F}_{t_0} \right] &
 =e^{i\lambda  t^{-\frac { 3\alpha +1}4}     \int_0 ^{t_0} \int_{\RR} p_{t-s} (x-y) \varphi(W(s,y)) W(ds,dy) } \\
 & \times   \E \left[   e^{i\lambda  t^{-\frac { 3\alpha +1}4}    \int_{t_0} ^{t} \int_{\RR} p_{t-s} (x-y)  \varphi(W(s,y))  W(ds,dy ) } | \mathcal{F}_{t_0} \right]\\
 & =: e^{i\lambda A_t} \times B_t.
 \end{align*}
 It is easy to show that $\lim_{t\rightarrow \infty}  A_t =0$ in $L^2(\Omega)$ as $ t\rightarrow\infty $. 
 In fact, using the rescaling properties we obtain that as $ t\to\infty $
 \begin{align*}
 & t^{-\frac { 3\alpha +1}2}   \E\left[  \int_0 ^{t_0} \int_{\RR} p_{t-s} (x-y)^2 \varphi(W(s,y))^2dsdy\right]\\
 \leq & 
% M^2t^{-\frac { 3\alpha+1}2} 
% \E\left[  \int_0 ^{t_0} \int_{\RR} p_{t-s} (x-y)^2 %\mathbf{1}_{W(s,y)\in K}
% dsdy\right]\\&+
  \|f\|_\infty^2
   \E\left[  \int_0 ^{t_0/t} \int_{\RR} p_{1-s} (\frac x{\sqrt{t}}-y)^2 |W(s,y)|^{2\alpha}dsdy\right]\rightarrow 0.
 \end{align*}

 Now, we continue with the term $ B_t $ for which we will use the decomposition
 \begin{align*}
 W(s,y)&= W(s,y)- W(t_0,y) + W(t_0,y),\\
 &=: \widehat{W}(s-t_0,y) + W(t_0,y).
 \end{align*}
Then, we can write
 \[
 B_t=
 \widehat{\E} \left[   \exp \left (i\lambda  t^{-\frac { 3\alpha +1}4}    \int_{0} ^{t-t_0} \int_{\RR} p_{t-t_0-s} (x-y) \varphi(\widehat{W}(s,y) + W(t_0,y) )  \widehat{W}(ds,dy )  \right)  \right],
 \]
 where $\widehat{\E}$ denotes the mathematical expectation with respect to the two-parameter Wiener process $\widehat{W}$. By the same renormalization arguments as in \eqref{eq:1} leading to \eqref{ecu3}, this gives 
 \begin{align*}
 B_t& =
 \widehat{\E} \Bigg[   \exp  \Bigg (i\lambda   \left( \frac  {t-t_0} t  \right)^{\frac { 3\alpha +1}4}    \int_{0} ^{1} \int_{\RR} p_{1-s} (\frac x{\sqrt{ t-t_0}}-y)
 F(t,s,y)   \widehat{W}(ds,dy )  \Bigg)  \Bigg].
 \end{align*} 
 Here,
 \begin{align*}
F(t,s,y)&:=
f((t-t_0)^{3/4}(\widehat{W}(s,y) +(t-t_0) ^{-\frac 34} W(t_0,
\sqrt{ t-t_0}y))) \\
&\quad\times
 |\widehat{W}(s,y) +(t-t_0) ^{-\frac 34} W(t_0,
 \sqrt{ t-t_0}y)|^\alpha .
 \end{align*}
 As $t\rightarrow \infty$ and $\lim _{x  \rightarrow \pm\infty} |x|^{-\alpha}\varphi(x) =c_\pm$, then $B_t$ converges almost surely to 
 \[
 \widehat{\E} \left[   \exp  \left (i\lambda       \int_{0} ^{1} \int_{\RR} p_{1-s} (y) F_\alpha(\widehat{W}(s,y))  \widehat{W}(ds,dy )  \right)  \right],
 \]
 where $ F_\alpha(x)=c_+\mathbf{1}_{x>0}+c_-\mathbf{1}_{x<0} $. Then the above formula is the characteristic function of { $X$}. As a consequence, for every bounded $\mathcal{F}_{t_0}$ measurable random variable $G$ we  obtain
 \[
 \lim_{t\rightarrow \infty} \E[ G e^{i\lambda  t^{-\frac { 3\alpha +1}4}    u(t,x)}]
 = \E[G] \E[ e^{i\lambda X}].
 \]
 
 This can be extended to any bounded random variable $G$ measurable with respect to the two-parameter Wiener process $W$ and this provides the desired stable convergence in the sense of Definition \ref{d:stable}.  \hfill $\square$

 \medskip
     \noindent
     For the proof of Theorem \ref{thm2}, we need the following lemma on the existence of the weighted local time  $L_{0,r}$.    
        
  \begin{lemma}  \label{lem1} For any $r\in [0,1]$, 
  the limit in $L^2(\Omega)$, as  $\ep$ tends to zero, of
   \[
   L_{0,r}^\varepsilon:= \int_0^r \int_{\RR}  p^2_{1-s} (y ) p_\varepsilon(W(s, y)) dyds,
 \]
 exists and the limit random variable  will be denoted by
   $L_{0,r}:=\int_0^r \int_{\RR}  p^2_{1-s} (y ) \delta_0(W(s, y)) dyds$.  
     \end{lemma}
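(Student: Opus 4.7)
\smallskip
The plan is to show that $\{L_{0,r}^\varepsilon\}_{\varepsilon>0}$ is Cauchy in $L^2(\Omega)$ as $\varepsilon\to 0$, from which convergence to the limit $L_{0,r}$ follows.

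The first step is to apply the Fourier representation $p_\varepsilon(x) = \frac{1}{2\pi}\int_\RR e^{i\xi x - \varepsilon \xi^2/2}d\xi$, interchange expectation and integrals by Fubini, and perform the resulting Gaussian expectation. Writing $\Sigma = \Sigma(s,y,s',y')$ for the covariance matrix of $(W(s,y),W(s',y'))$, with entries $\Sigma_{11}=s|y|$, $\Sigma_{22}=s'|y'|$, $\Sigma_{12} = (s\wedge s')(|y|\wedge|y'|)\mathbf{1}_{\{yy'>0\}}$, this gives
\[
\E[L^\varepsilon_{0,r} L^{\varepsilon'}_{0,r}] = \frac{1}{2\pi}\int_{[0,r]^2}\int_{\RR^2}\frac{p^2_{1-s}(y)\,p^2_{1-s'}(y')}{\sqrt{\det(\Sigma+\mathrm{diag}(\varepsilon,\varepsilon'))}}\,dy\,dy'\,ds\,ds'.
\]
As $\varepsilon,\varepsilon'\to 0$ the integrand converges pointwise to $\frac{p^2_{1-s}(y)p^2_{1-s'}(y')}{2\pi\sqrt{\det\Sigma}}$; moreover, since $\det(\Sigma+\mathrm{diag}(\varepsilon,\varepsilon'))\ge \det\Sigma$ this pointwise limit dominates the integrand, opening the way to dominated convergence.

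The main obstacle is verifying that the limiting kernel is integrable. By symmetry reduce to $s\le s'$ and split according to the sign of $yy'$. When $yy'<0$ the kernel factorizes as $p^2_{1-s}(y)p^2_{1-s'}(y')/\sqrt{ss'|y||y'|}$, and finiteness follows from the elementary identity $\int_\RR p^2_{1-u}(y)|y|^{-1/2}dy = C(1-u)^{-3/4}$ combined with integration in $(s,s')$. When $yy'>0$, taking $y,y'>0$ with $y\le y'$ (the remaining subcase is analogous), $\det\Sigma = sy(s'y'-sy)$, and the change of variables $u=sy/(s'y')$ produces the explicit identity
\[
\int_0^{y'}\frac{dy}{\sqrt{sy(s'y'-sy)}} = \frac{2\arcsin\sqrt{s/s'}}{s},
\]
whose small-$s$ behavior $O((ss')^{-1/2})$ cancels the apparent $1/s$ singularity; combining this with the Gaussian weight $p^2_{1-s}(y)$, a routine $y'$-integration, and integration over $(s,s')\in(0,r)^2$ yields a finite value.

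Once integrability is established, the dominated convergence theorem gives a common limit $L^*$ for $\E[L^\varepsilon_{0,r} L^{\varepsilon'}_{0,r}]$ as $\varepsilon,\varepsilon'\to 0$, independent of the chosen sequences. Hence
\[
\E\bigl[|L^\varepsilon_{0,r}-L^{\varepsilon'}_{0,r}|^2\bigr] = \E[(L^\varepsilon_{0,r})^2] - 2\,\E[L^\varepsilon_{0,r} L^{\varepsilon'}_{0,r}] + \E[(L^{\varepsilon'}_{0,r})^2] \longrightarrow 0,
\]
so $\{L^\varepsilon_{0,r}\}$ is Cauchy in $L^2(\Omega)$, and its limit is the random variable $L_{0,r}$ in the statement.
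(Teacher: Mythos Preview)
Your overall strategy---Fourier representation, computing $\E[L^\varepsilon_{0,r}L^{\varepsilon'}_{0,r}]$, showing the limiting kernel is integrable, then dominated convergence and the Cauchy argument---is exactly the paper's approach. The difference lies only in how you verify integrability of $p^2_{1-s}(y)p^2_{1-s'}(y')(\det\Sigma)^{-1/2}$ on the region $\{yy'>0\}$: the paper makes the substitution $z=sy$, $z'=s'y'$ and then uses the bound $e^{-z^2/(s^2(1-s))}\le K_\delta (s^2(1-s)/z^2)^\delta$ to tame the singularities, whereas you invoke the closed-form identity $\int_0^{y'}[sy(s'y'-sy)]^{-1/2}\,dy = 2s^{-1}\arcsin\sqrt{s/s'}$. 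Your route is pleasantly explicit and avoids the auxiliary exponent $\delta$.

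Two points of presentation deserve tightening. First, the arcsin identity integrates $y$ \emph{without} the weight $p^2_{1-s}(y)$; to use it you must first bound $p^2_{1-s}(y)\le (2\pi(1-s))^{-1}$, which you should say. Doing so leaves
\[
C\int_{0<s<s'<r}\frac{\arcsin\sqrt{s/s'}}{s(1-s)}\Bigl(\int_0^\infty p^2_{1-s'}(y')\,dy'\Bigr)\,ds\,ds'
= C'\int_{0<s<s'<r}\frac{\arcsin\sqrt{s/s'}}{s(1-s)\sqrt{1-s'}}\,ds\,ds',
\]
and one checks (integrating $s'$ first, using $\arcsin\le\pi/2$ near $s=1$ and $\arcsin\sqrt{s/s'}\sim\sqrt{s/s'}$ near $s=0$) that this is finite even for $r=1$. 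Second, the subcase $s<s'$, $y>y'$ is not related to $s<s'$, $y<y'$ by any symmetry of the integrand, so ``analogous'' is too quick; there $\det\Sigma=sy'(s'y-sy')\ge ss'y'(y-y')$, and a direct estimate $\int_{y'}^\infty p^2_{1-s}(y)(y-y')^{-1/2}dy\le C(1-s)^{-3/4}$ finishes it. With these two clarifications your argument is complete.
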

     \begin{proof}
     Using the inverse Fourier transform formula for a Gaussian law, we have
     \[
      L^\varepsilon_{0,r}=\frac 1{2\pi} \int_0^r \int_{\RR^2}  p^2_{1-s} (y )  e^{ i\xi W(s,y) - \frac {\ep^2} 2 \xi^2}  d\xi dyds.
      \]
      Therefore,
      \begin{align*}
      \E( L^\varepsilon_{0,r} L^{\varepsilon'}_{0,r}) &=(2\pi)^{-2} \int_0^r \int_0^r \int_{\RR^4}  p^2_{1-s} (y ) p^2_{1-s'} (y')  \\
      &\qquad \times  \E\left( e^{ i\xi W(s,y) - \frac {\ep^2} 2 \xi'^2-i\xi' W(s',y') - \frac {\ep'^2} 2 \xi'^2 
      }  \right) d\xi dy d\xi' dy'\ ds'ds \\
      &=(2\pi)^{-2}   \int_0^r \int_0^r \int_{\RR^4}  p^2_{1-s} (y ) p^2_{1-s'} (y') e^{ - \frac {\ep^2} 2 \xi^2  -\frac {\ep'^2} 2 \xi'^2 } \\
      &\qquad \times  e^{ -\frac 12   \E ( |\xi W(s,y) - \xi'W(s',y')|^2)}
         d\xi dy d\xi' dy'\ ds'ds.
      \end{align*}
      As $\ep$ and $\ep'$ tend to zero we obtain the limit
      \[
  I:=    \int_0^1 \int_0^1 \int_{\RR^2}  p^2_{1-s} (y ) p^2_{1-s'} (y')  
 f(s,y,s',y')
         dy dy'\ ds'ds,
         \]
         where  $f(s,y,s',y')$ is the density at $(0,0)$ of the random vector $(W(s,y), W(s',y'))$.
         We claim that $I<\infty$. Indeed, first notice that
           $f(s,y,s',y')$ is bounded by
         \begin{align*}
     &(2\pi)^{-1}    \left ( ss' |y| |y'| - (s\wedge s')^2 ( |y| \wedge |y'| )^2 \mathbf{1}_{\{yy'>0\}}\right ) ^{-\frac 12}\\
     =&(2\pi)^{-1}   \left[ (s\wedge s') ( |y| \wedge |y'| ) \left ( (s\vee s') (|y|\vee |y'| )- (s\wedge s') ( |y| \wedge |y'|)  \mathbf{1}_{\{yy'>0\}}\right )  \right]^{-\frac 12}.
         \end{align*}
     To show that $I<\infty$, it suffices to consider the integral over the set $\{ yy'>0 \}$, because the integral over  $\{ yy' \le 0\}$ is clearly finite. By symmetry, we only need to show that the integral
     \[
   J:=  \int_{0<s<s'<1} \int_{ 0<y<y'<\infty} (1-s)^{-1} (1-s')^{-1}  (sy(s'y'-sy))^{-\frac 12} e^{-\frac {y^2} {1-s} -\frac{ y'^2} { 1-s'}} dy dy' ds ds' 
     \]
     is finite. With the change of variables $sy=z$ and $s'y'=z'$ we obtain
       \[
   J\le  \int_{0<s<s'<1} \int_{ 0<z<z'<\infty} [(1-s) (1-s')ss']^{-1}  (z(z'-z))^{-\frac 12} e^{-\frac {z^2} {s^2(1-s)} -\frac{ z'^2} { s'^2(1-s')}} dz dz' ds ds'.
     \]
     Fix $\delta>0$ and let $K_\delta = \sup_{s\in [0,1],z>0} \left( \frac {z^2}  { s^2(1-s)}   \right)^\delta e^{-\frac {z^2} {s^2(1-s)} }$. Then,
     \begin{align*}
     J & \le K_\delta \int_{0<s<s'<1} \int_{ 0<z<z'<\infty} (1-s)^{-1+\delta} (1-s')^{-1} s^{-1+2\delta}  (s')^{-1}  z^{-\frac 12-2\delta} (z'-z)^{-\frac 12} 
    \\
    & \times e^{-\frac{ z'^2} { s'^2(1-s')}}   dz dz' ds ds'.
     \end{align*}
      Integrating first in $z$ and later in $z'$ it is easy to show that the above integral is finite if $\delta<\frac 14$. This allows us to conclude the proof of  the lemma. 
                  \end{proof}

\medskip
\noindent
 {\it Proof of Theorem \ref{thm2}:}  
 Consider the random variable $\widetilde{u}(t,x)$ defined in (\ref{ecu3}). We can put $ \widetilde{u}(t,x)=  t^{-\frac 38}  M_t(1,x)$, where for $r\in [0,1]$, 
 \[
 M_t(r,x) :=  t^{\frac 38}   \int_0^r \int_{\RR}  p_{1-s} (\frac {x}{\sqrt{t}} -y )  \varphi( t^{3/4} W(s, y)) W(ds,dy).
 \]
 Then $\{M_t(\cdot, x), t\ge 0 \}$ is a family of continuous martingales in the time interval $[0,1]$.   We will find the limit as $t\rightarrow \infty$ of the quadratic variation of these martingales. We have
 \begin{align*}
 \langle M_t(\cdot,x) \rangle_{r} &= t^{\frac 34}  \int_0^r \int_\RR p^2_{1-s} (\frac {x}{\sqrt{t}} -y )   \varphi^2( t^{\frac 34} W(s, y))  dsdy.
  \end{align*}
  The proof of the theorem will be done in several steps.
  
  \medskip
  \noindent
  {\it Step 1.}  \quad 
  In this step we prove that 
  $\langle M_t(\cdot,x)  \rangle_{r}$ converges in $L^1(\Omega) $ to  the weighted local time $L_{0,r}$. First, we claim that
  \begin{equation} \label{ecu6}
  \lim_{t\rightarrow \infty} \E \left( \left|  \langle M_t(\cdot,x) \rangle_{r}  - t^{\frac 34}  \int_0^r \int_\RR p^2_{1-s}  (y) \varphi^2( t^{\frac 34} W(s, y))  dsdy \right|  \right)=0.
  \end{equation}
  This follows from the fact that
  \[
  \E\left(  t^{\frac 34}\varphi^2( t^{\frac 34} W(s, y))   \right) \le  \| \varphi\|_{L^2(\RR)}^2 (2\pi  s |y|)^{-\frac 12}
  \]
  and
  \[
  \lim_{t\rightarrow \infty}   \int_0^r \int_\RR  \left| p^2_{1-s} (\frac {x}{\sqrt{t}} -y ) -p^2_{1-s}  (y) \right|     \frac  1{ \sqrt{s |y|}} dyds =0.
  \]

On the other hand, for any fixed $t$, we have
\begin{equation} \label{ecu5}
\lim_{\ep \rightarrow 0} J_{\ep,t} =0,
\end{equation}
where
\begin{align*}
J_{\ep,t} &=
\E \Big( \Big|  t^{\frac 34}  \int_0^r \int_\RR p^2_{1-s}  (y) \varphi^2( t^{\frac 34} W(s, y))  dsdy\\
&-  \int_0^r \int_\RR p^2_{1-s}  (y)  \int_{\RR}  \varphi^2(\xi)   p_{\ep t^{-\frac 32} }( W(s, y) - t^{-\frac 34}\xi)d\xi   dsdy \Big|  \Big).
\end{align*}
To show (\ref{ecu5}), notice first  that
 \begin{align*}
&      \int_0^r \int_\RR p^2_{1-s}  (y)  \int_{\RR}  \varphi^2(\xi)   p_{\ep t^{-\frac 32}}( W(s, y) - t^{-\frac 34}\xi)d\xi   dsdy 
\\
%& \qquad =   \int_0^r \int_\RR p^2_{1-s}  (y)  \int_{\RR}  \varphi^2(\xi)   p_{\ep t^{-\frac 34}}( W(s, y) - t^{-\frac 34}\xi)d\xi   dsdy \\
&\qquad = t^{\frac 34}  \int_0^r \int_\RR p^2_{1-s}  (y)  \int_{\RR}  \varphi^2(\xi)   p_\ep( t^{\frac 34} W(s, y) -\xi)d\xi   dsdy \\
&\qquad =   t^{\frac 34}  \int_0^r \int_\RR p^2_{1-s}  (y) ( \varphi^2 * p_\ep) (t^{\frac 34} W(s, y))  dsdy.
  \end{align*}
  Therefore
  \begin{align*}
  J_{\ep,t} & = \E \left( \left|  t^{\frac 34}  \int_0^r \int_\RR p^2_{1-s}  (y) (\varphi^2-\varphi^2 * p_\ep) ( t^{\frac 34} W(s, y))  dsdy \right| \right)\\
  & \le t^{\frac 34}  \int_0^r \int_\RR p^2_{1-s}  (y)  \E(|  (\varphi^2-\varphi^2 * p_\ep) ( t^{\frac 34} W(s, y))  |) dsdy \\
  &\le \| \varphi^2 - \varphi^2 * p_\ep\|_{L^1(\RR)}    \int_0^r \int_\RR p^2_{1-s}  (y)  (2\pi s |y|) ^{-\frac 12} dy ds,
  \end{align*}
  which converges to zero as $\ep$ tends to zero because  $  \int_0^r \int_\RR p^2_{1-s}  (y)  (2\pi s |y|) ^{-\frac 12} dy ds <\infty$ and $\varphi \in L^2(\RR)$.

  We also  claim that
  \begin{equation}
 \lim_{t\rightarrow \infty}  \sup_{\ep >0}  I_{\ep,t}  =0, \label{ecu4}
\end{equation}
where
    \begin{align*}
  I_{\ep,t} &= \E \Big( \Big|  \int_0^r \int_{\RR^2} p^2_{1-s}  (y)    \varphi^2(\xi)   p_{\ep t^{-\frac 32}}( W(s, y) - t^{-\frac 34}\xi)d\xi   dy ds\\
  & \qquad  \qquad \qquad  - \| \varphi \|_{L^2(\RR)}^2 \int_0^r \int_\RR p^2_{1-s}  (y)    p_{\ep t^{-3/2} }( W(s, y) )  dy ds \Big|^2 \Big).
  \end{align*}
  To show (\ref{ecu4}), we write
   \begin{align*}
  I_{\ep,t} &= (2\pi)^{-2} \E \left( \left|  \int_0^r \int_{\RR^2} p^2_{1-s}  (y)    \varphi^2(\xi)   \int_{\RR}  e^{i \eta W(s,y)  - \frac {\ep^2 t^{-3}}2 \eta^2} ( e^{i\eta t^{-\frac 34} \xi} -1) 
  d\eta d\xi dyds \right|^2 \right) \\
  &= (2\pi)^{-2}  \int_{[0,r]^2}  \int_{\RR^4} p^2_{1-s}  (y) p^2_{1-s'}  (y')    \varphi^2(\xi)   \varphi^2(\xi')   \\
  & \qquad  \times \int_{\RR^2}  e^ {-\frac 12 \E( |\eta W(s,y)- \eta' W(s',y') |^2)}  e^{  - \frac {\ep^2 t^{-3}}2( \eta^2+ \eta'^2)}  \\
  & \qquad \times ( e^{i\eta t^{-\frac 34} \xi} -1)( e^{-i\eta' t^{-\frac 34} \xi'} -1)
  d\eta d\eta' d\xi d\xi'  dy dy'ds ds' ,
  \end{align*}
  which leads to the estimate
   \begin{align*}
  \sup_{\ep>0} I_{\ep,t} &\le    (2\pi)^{-2}  \int_{[0,r]^2}  \int_{\RR^4} p^2_{1-s}  (y) p^2_{1-s'}  (y')    \varphi^2(\xi)   \varphi^2(\xi')   \\
  & \qquad  \times \int_{\RR^2}  e^ {-\frac 12 \E( |\eta W(s,y)- \eta' W(s',y') |^2)}     \left(| \eta \xi \eta' \xi' | ^\beta t^{-\frac 34 \beta} \wedge 4\right)
  d\eta d\eta' d\xi d\xi'  dy dy'ds ds' ,
  \end{align*}
  for any $\beta \in [0,1]$. Then, by the dominated convergence theorem,  the limit (\ref{ecu4}) follows from
  \[
  \int_{[0,r]^2}  \int_{\RR^4} p^2_{1-s}  (y) p^2_{1-s'}  (y')      \int_{\RR^2}  e^ {-\frac 12 \E( |\eta W(s,y)- \eta' W(s',y') |^2)}    
  d\eta d\eta'  dy dy'ds ds'  <\infty,
  \]
  which follows from the fact that   $(2\pi)^{-2} \int_{\RR^2}  e^ {-\frac 12 \E( |\eta W(s,y)- \eta' W(s',y') |^2)}    
  d\eta d\eta' $ is the density at $(0,0)$ of the random vector $(W(s,y), W(s',y'))$ (see the proof of Lemma 2.1).
  
 By Lemma \ref{lem1}, $ \int_0^r \int_\RR p^2_{1-s}  (y)    p_{\ep }( W(s, y) )  dy ds$ converges in $L^2(\Omega)$ as $t\rightarrow \infty$ to the
 weighted local time $L_{0,r}$. As a consequence, from (\ref{ecu6}),  (\ref{ecu5}), (\ref{ecu4}) and Lemma  \ref{lem1} we deduce that 
   $\langle M_t(\cdot,x)  \rangle_{r}$ converges in $L^1(\Omega) $ to  the weighted local time $L_{0,r}=\int_0^r \int_{\RR}  p^2_{1-s} (y ) \delta_0(W(s, y)) dyds$.

   \medskip
   \noindent
   {\it Step 2.}  \quad  Fix an orthonormal basis $\{e_i, i\ge 1\}$ of $L^2(\RR)$ formed by bounded functions and consider the martingales
   \[
   M^i(r)= \int_0^r \int_{\RR} e_i(y) W(ds,dy), \quad r\in [0,1].
   \]
   We claim that the joint quadratic variation $\langle M_t(\cdot, x), M^i \rangle_r$ converges to zero in $L^1(\Omega)$ as $t\rightarrow \infty$.
  Indeed,
\[
  \langle M_t(\cdot, x), M^i \rangle_ r= t^{\frac 38}  \int_0^r  \int_{\RR} p_{1-s} ( \frac x {\sqrt{t}} -y) e_i(y) \varphi( t^{\frac 34} W(s,y)) dyds.
\]
Then, using the fact that $\varphi \in L^p(\RR)$ for some $p<2$, we can write
\begin{align*}
&\E (|  \langle M_t(\cdot, x), M^i \rangle_ r|) \le   t^{\frac 38}  \int_0^r  \int_{\RR} p_{1-s} ( \frac x {\sqrt{t}} -y) |e_i(y) | \E(|\varphi( t^{\frac 34} W(s,y)) |)dyds\\
&\qquad \le  t^{\frac 38}  \int_0^r  \int_{\RR} p_{1-s} ( \frac x {\sqrt{t}} -y)| e_i(y)|  \int_{\RR}  |\varphi(z)|  \frac 1{ \sqrt{ 2\pi t^{\frac 32} s|y|} }  \exp\left(-\frac{z^2} {2 t^{\frac 32} s|y|}\right) dzdyds
\\
& \qquad \le t^{\frac 38- \frac 3{ 4p}}   \| e_i \|_{\infty}  \| \varphi\|_{L^p(\RR)}  \int_0^r  \int_{\RR} p_{1-s} ( \frac x {\sqrt{t}} -y)(s|y|)^{- \frac 1{2p}} dyds,
  \end{align*}
  where $\frac 1p + \frac 1q =1$. Then, the claim follows because  $\frac 38-  \frac 3{ 4p} <0$  for $ p\in (1,2) $ and 
 \[
  \int_0^r  \int_{\RR} p_{1-s} ( \frac x {\sqrt{t}} -y)(s|y|)^{-\frac 1{2p}} dyds <\infty.
  \]
  
   \medskip
   \noindent
   {\it Step 3.}  \quad  Given a sequence $t_n \uparrow \infty$, set  $M^n_{0,r} = M_{t_n}(r,x)$ and $M^n_{i,r}= M^i(r)$ for $i \ge 1$.   These martingales, after possibly enlarging the probability space (in particular extending the definition of the martingales for $ r>1 $), possess  Dambis-Dubins-Schwarz Brownian motions $\beta^n_i$, 
   such that
   \[
   M^n_{0,r } = \beta^n_{0,\langle M^n_0 \rangle_r}
   \]
   and
   \[
   M^n_{i,r} = \beta^n_{i, r \int_{\RR} e_i(y)^2 dy}, \quad i \ge 1.
   \]
We have proved in Step 2 that  $ \sup_{r\in [0,1] } |\langle  M^n_i, M^n_0 \rangle_r|  \rightarrow 0$ in probability as $n\rightarrow \infty$. Moreover, it is clear that for any $1\le i <j$,  $   \langle  M^n_i, M^n_j \rangle_ r =0$. Then, by the asymptotic Ray-Knight theorem \cite{PY86}, we conclude that the Brownian motions $\beta^n_{i,y}$, $i\ge 0$,  converge in law to a family of independent Brownian motions $\beta_{i,y}$, $i\ge 0$. Together with Step 1, we obtain that $ M_{t_n} (r,x)$ converges weakly as
$n\rightarrow \infty$ to $\beta_{0, L_{0,r} \| \varphi \|^2_{L^2(\RR)} }$, where the Brownian motion $\beta_0$ is independent of the stochastic integrals $\{ \int_0^r e_i(y) W(ds,dy), r\in [0,1], i\ge 1\}$, that is, $\beta_0$ is independent of the white noise $W$ on $[0,1] \times\RR$.

   Thus, mutatis mutandis, we have proved the convergence  in law of  $(\widehat{W}, t^{\frac 38}  \widetilde{u}(t,x))$ to  $(\widehat{W}, \beta_{0, L_{0,1} \| \varphi \|^2_{L^2(\RR)} })$ as
   $t\rightarrow \infty$, where  $L_{0,1}=\int_0^1 \int_{\RR}  p^2_{1-s} (y ) \delta_0(\widehat{W}(s, y)) dyds$  and $\beta_0$ is independent of $\widehat{W}$.
   It remains to show the independence of $(\beta_0, \widehat{W})$ and $W$. For this we use the method of characteristic functions as in the proof of Theorem \ref{thm1b}.
   
      \medskip
   \noindent
   {\it Step 4.}  \quad Fix $\lambda \in \R$ and $t_0 \ge 0$. We follow a similar argument as in the proof of Theorem \ref{thm1b}. In fact, we can write
   \begin{align*}
   \E \left[ e^{i\lambda  t^{\frac 18} u(t,x)} \right] &=e^{i\lambda  t^{\frac 1 8 }     \int_0 ^{t_0} \int_{\RR} p_{t-s} (x-y)  \varphi(W(s,y) ) W(ds,dy) } \\
 & \times   \E \left[   e^{i\lambda  t^{\frac  18}    \int_{t_0} ^{t} \int_{\RR} p_{t-s} (x-y)   \varphi( W(s,y)) W(ds,dy)  } | \mathcal{F}_{t_0} \right]\\
 & =: e^{i\lambda A_t} \times B_t.
 \end{align*}
 As before, it is easy to show that $\lim_{t\rightarrow \infty}  A_t =0$ in $L^2(\Omega)$. On the other hand,  with the decomposition
 \[
 W(s,y)= W(s,y)- W(t_0,t) + W(t_0,y),
 \]
 for the term $B_t$, we can write
\[
B_t=
    \widehat{\E} \left[   \exp \left (i\lambda  t^{\frac  18}    \int_{0} ^{t-t_0} \int_{\RR} p_{t-t_0-s} (x-y)  \varphi( \widehat{W}(s,y) + W(t_0,y) ) \widehat{W}(ds,dy )  \right)  \right],
\]
where $\widehat{\E}$ denotes the mathematical expectation with respect to the two-parameter Wiener process $\widehat{W}$ defined by
$\widehat{W}(s,y)= W(s+ t_0,y) -W(s,y)$. 
 By the same arguments as before, this leads to
\begin{align*}
B_t& =
    \widehat{\E} \Bigg[   \exp \Bigg(  (i\lambda     { t^{\frac 18}}  (t-t_0)^{\frac 14}     \int_{0} ^{1} \int_{\RR} p_{1-s} (\frac x{\sqrt{t-t_0}}-y)\\
    & \qquad \times   \varphi( \widehat{W}((t-t_0)s, \sqrt{t-t_0}y) +(t-t_0) ^{-\frac 34} W(t_0,
      \sqrt{ t-t_0}y) )  \widehat{W}(ds,dy )  \Bigg)  \Bigg].
\end{align*} 
As $t\rightarrow \infty$, $B_t$ converges almost surely to
\[
\widehat{\E} \left[   \exp  \left (i\lambda      \beta_{0, L_{0,1} \| \varphi \|^2_{L^2(\RR)} } \right)  \right].
 \]
As a consequence, for every bounded and $\mathcal{F}_{t_0}$-measurable random variable $G$ we  obtain
 \[
 \lim_{t\rightarrow \infty} \E[ G \exp \left(i\lambda   t^{\frac 18} u(t,x) \right)]
= \E[G] \E\left[  \exp \left( i\lambda  \beta_{0, L_{0,1} \| \varphi \|^2_{L^2(\RR)} }  \right) \right].
 \]
 This completes the proof. \hfill $\square$

 \section{Large times and space averages}
 
 The asymptotic behavior of the spacial averages  $\int_{-R} ^R u(t,x) dx$ as $R\rightarrow \infty$ has been recently  studied in  the references \cite{DeNuZh,HuNuVi,HuNuViZh}. In these papers $u(t,x)$ is the solution to a stochastic partial differential equation with initial condition $u(0,x)=1$ and a Lipschitz nonlinearity $\sigma(u)$. The solution process is stationary in $x\in \R$ and the limit is Gaussian with a proper normalization.  In the case considered here, the lack of stationarity creates different limit behaviors.  In order to achieve a limit, we will consider the case where both $R$ and $t$ tend to infnity. 
 
 Set
 \[
 u_R(t)= \int_{-R} ^R \int_0^t \int_{\R}  p_{t-s} (x-y) \varphi(W(s,y))W(ds,dy) dx.
 \]
 As before, $u_R(t)$ has the same law as
  \[
\widetilde{ u}_R(t)= t^{\frac 14}  \int_{-R} ^R \int_0^1 \int_{\R}  p_{1-s} (\frac x{\sqrt{t}}-y) \varphi(t^{\frac 34} W(s,y))W(ds,dy) dx.
 \]
Consider first  the case where $\varphi$ is an homogeneous function. 
\begin{theorem}  \label{thm1a}  Suppose that $\varphi(x) = |x| ^\alpha$ for some $\alpha >0$.  Suppose that $t_R \rightarrow \infty$  as $R\rightarrow \infty$. Then,
with    $\widehat{W}$ is a two-parameter Wiener process independent of $W$, the following stable convergences hold true:
\begin{itemize}
\item[(i)] If $\frac R  {\sqrt{t_R}} \rightarrow c$, with $c\in (0,\infty)$, 
\[
t_R^{-\frac  34 (\alpha+1)} u(t_R)   \stackrel  {\rm stably}  \longrightarrow   \int_{-c} ^c   \int_0^1   \int_{\R} p_{1-s} (x-y) |\widehat{W}(s, y))|^\alpha \widehat{W}(ds,dy)dx.
\]
\item[(ii)] If $\frac R  {\sqrt{t_R}} \rightarrow 0$,  
\[
R^{-1} t_R^{-\frac  {3\alpha +1} 4} u(t_R)   \stackrel  {\rm stably}  \longrightarrow   2  \int_0^1   \int_{\R} p_{1-s} (y) |\widehat{W}(s, y))|^\alpha \widehat{W}(ds,dy)dx.
\]
\item[(iii)] If $\frac R  {\sqrt{t_R}} \rightarrow \infty$,  
\[
R ^{-\frac{ \alpha +1}2}  t_R^{-\frac  {\alpha +1}2}  u(t_R)   \stackrel  {\rm stably}  \longrightarrow  \int_{-1} ^{ 1} \int_0^1   |\widehat{W}(s,y)| ^{\alpha}  \widehat{W}(ds,dy).
\]
\end{itemize}
 \end{theorem}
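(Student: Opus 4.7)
My plan is to follow the template of Theorem \ref{thm1b}: work with the law-equivalent version
\[
\widetilde u_R(t)=t^{(3\alpha+1)/4}\int_{-R}^{R}\!\int_0^1\!\int_{\RR} p_{1-s}\!\left(\tfrac{x}{\sqrt{t}}-y\right)|W(s,y)|^{\alpha}\,W(ds,dy)\,dx,
\]
and, for each of the three regimes determined by $\rho_R:=R/\sqrt{t_R}$, perform the change of variables that brings the integrand to unit order. The correct normalization is then read off from the Jacobians together with the scaling $W(s,\rho u)\stackrel{d}{=}\rho^{1/2}\widetilde W(s,u)$ of the two-parameter Wiener process.

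For case (i), substitute $x=\sqrt{t}\,w$; the prefactor becomes $t^{3(\alpha+1)/4}$ and the domain of $w$ is $[-\rho_R,\rho_R]\to[-c,c]$. Convergence in $L^2(\Omega)$ to the stated limit is direct, using $\E[|W(s,y)|^{2\alpha}]=C_{\alpha}(s|y|)^{\alpha}$ and dominated convergence. For case (ii), since $\rho_R\to 0$, the heat kernel $p_{1-s}(x/\sqrt{t}-y)$ approaches $p_{1-s}(y)$ uniformly for $x\in[-R,R]$ and $y$ in a compact set, so
\[
\int_{-R}^{R}p_{1-s}(x/\sqrt{t}-y)\,dx=2R\,p_{1-s}(y)+o(R),
\]
and collecting factors yields the normalization $R\cdot t^{(3\alpha+1)/4}$ together with the factor $2$ in the limit.

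Case (iii) is the delicate one. Substitute $x=Rw$ and $y=\rho_R u$, and set $\widetilde W(s,u):=\rho_R^{-1/2}W(s,\rho_R u)$, which is again a two-parameter Wiener process. The stochastic integral transforms as $W(ds,\rho_R\,du)\stackrel{d}{=}\rho_R^{1/2}\widetilde W(ds,du)$ and $|W(s,\rho_R u)|^{\alpha}=\rho_R^{\alpha/2}|\widetilde W(s,u)|^{\alpha}$, giving a prefactor $t^{(3\alpha+1)/4}R\,\rho_R^{(\alpha+1)/2}$. Using Fubini together with
\[
\int_{-1}^{1} p_{1-s}(\rho_R(w-u))\,dw=\rho_R^{-1}\!\!\int_{-\rho_R(1+u)}^{\rho_R(1-u)}\! p_{1-s}(v)\,dv\longrightarrow \rho_R^{-1}\mathbf{1}_{\{|u|<1\}},
\]
one power of $\rho_R$ cancels, leaving the normalization $t^{(\alpha+1)/2}R^{(\alpha+1)/2}$ and the limit $\int_{0}^{1}\!\int_{-1}^{1}|\widetilde W(s,u)|^{\alpha}\widetilde W(ds,du)$. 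The $L^2$-justification of the interchange of limit and stochastic integral relies on an estimate in the spirit of Lemma \ref{lem1}, based on the joint density of $(\widetilde W(s,y),\widetilde W(s',y'))$ at the origin.

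To upgrade each weak limit to stable convergence, I would repeat the $\mathcal F_{t_0}$-conditioning argument used in Theorem \ref{thm1b}: fix $t_0\ge 0$, split $W(s,y)=\widehat W(s-t_0,y)+W(t_0,y)$ for $s\ge t_0$, check that the integral over $[0,t_0]$ contributes negligibly after rescaling, and verify that in the rescaled conditional characteristic function the remainder $(t-t_0)^{-3/4}W(t_0,\sqrt{t-t_0}\,y)$ vanishes inside $|\cdot|^{\alpha}$ as $t\to\infty$. This produces a deterministic limit independent of $\mathcal F_{t_0}$ and hence stable convergence in the sense of Definition \ref{d:stable}. The main obstacle I anticipate is the bookkeeping in case (iii): both arguments of $W$ are rescaled simultaneously, the heat kernel collapses to a delta function on scale $\rho_R^{-1}$, and the boundary contribution from $|u|$ near $1$ must be dominated uniformly in $\rho_R$ --- a task that, as in Lemma \ref{lem1}, requires careful use of the joint Gaussian density.
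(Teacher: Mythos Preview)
Your approach is essentially the same as the paper's: the same substitutions ($x\to\sqrt{t_R}\,w$ for (i), $x\to Rx$ for (ii), and then additionally $y\to\rho_R u$ for (iii)) and the same conditional--characteristic--function argument for stability. The only deviation is your remark that the $L^2$ justification in case (iii) ``relies on an estimate in the spirit of Lemma~\ref{lem1}, based on the joint density of $(\widetilde W(s,y),\widetilde W(s',y'))$ at the origin''; this is unnecessary. By the It\^o isometry, the squared $L^2$-distance between the approximating and limiting stochastic integrals is a single integral in $(s,y)$ with integrand
\[
\Bigl(\int_{-1}^{1}p_{\epsilon(1-s)}(x-y)\,dx-\mathbf 1_{(-1,1)}(y)\Bigr)^{2}\,\E\bigl[|W(s,y)|^{2\alpha}\bigr]
= C_\alpha\Bigl(\int_{-1}^{1}p_{\epsilon(1-s)}(x-y)\,dx-\mathbf 1_{(-1,1)}(y)\Bigr)^{2}(s|y|)^{\alpha},
\]
where $\epsilon=t_R/R^2\to0$; the bracket is bounded by $1$ and has a Gaussian tail for $|y|>1$, so ordinary dominated convergence suffices. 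The density--at--origin machinery of Lemma~\ref{lem1} belongs to the $\varphi\in L^2$ regime (Theorem~\ref{thm2}), not to the homogeneous case treated here.
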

 
  \begin{proof} We have, with the change of variable $\frac x{\sqrt{t_R}}  \rightarrow x $,
   \[
\widetilde{ u}_R(t_R)= t_R^{\frac  {3\alpha+1} 4+\frac 12}  \int_{-R/\sqrt{t_R}} ^{ R/\sqrt{t_R}} \int_0^1\int_{\R}  p_{1-s} (x-y)  |W(s,y)| ^\alpha W(ds,dy) dx,
 \]
 and (i)  follows by letting $R\rightarrow \infty$. If $\frac R  {\sqrt{t_R}} \rightarrow 0$,  with the change of variable  $x  \rightarrow Rx $, we can write
    \begin{equation} \label{eq1}
\widetilde{ u}_R(t_R)=R t_R^{\frac  {3\alpha+1} 4}  \int_{-1} ^{ 1} \int_0^1\int_{\R}  p_{1-s} (x\frac R{\sqrt{t_R}}-y)  |W(s,y)| ^\alpha W(ds,dy) dx,
 \end{equation}
 which implies (ii). The proof of (iii) is more involved. Making the change of variable $y \rightarrow  y \frac R{\sqrt{t_R}}$ in  (\ref{eq1}) yields
\begin{align*}
\widetilde{ u}_R(t_R)&=R ^{\frac{ \alpha +3}2}  t_R^{\frac  \alpha 2} 
\int_{-1} ^{ 1} \int_0^1\int_{\R}  p_{1-s} (\frac R{\sqrt{t_R}}(x-y))  |W(s,y)| ^\alpha W(ds,dy) dx \\
& =R ^{\frac{ \alpha +1}2}  t_R^{\frac { \alpha +1}2} 
\int_{-1} ^{ 1} \int_0^1\int_{\R}  p_{\frac {t_R}{R^2}(1-s)} (x-y)  |W(s,y)| ^\alpha W(ds,dy) dx.
\end{align*}
Finally the stochastic integral
\[
\int_{-1} ^{ 1} \int_0^1\int_{\R}  p_{\frac {t_R}{R^2}(1-s)} (x-y)  |W(s,y)| ^{\alpha}  W(ds,dy)dx
\]
converges in $L^2(\Omega)$ as $R\rightarrow \infty$ to 
\[
\int_{-1} ^{ 1} \int_0^1   |W(s,y)| ^{\alpha}  W(ds,dy).
\]
The stable character of the convergence can be proved by the same arguments, based on the conditional characteristic function, as in the proof of Theorem \ref{thm1b}.
 \end{proof}
 For a function which satisfies integrability  conditions with respect to the Lebesgue measure, we have the following result.
 
 \begin{theorem}  \label{thm11}  Suppose that $\varphi(x) \in L^2(\R)$.  Suppose that $t_R \rightarrow \infty$  as $R\rightarrow \infty$. Then,
with $Z$ a $N(0,1)$ random variable and     $\widehat{W}$  an independent two-parameter Wiener process such that $(Z, \widehat{W})$ are   independent of $W$, the following stable convergences hold true:
\begin{itemize}
\item[(i)] If $\frac R  {\sqrt{t_R}} \rightarrow c$, with $c\in (0,\infty)$, 
\[
t_R^{-\frac  38} u(t_R)   \stackrel  {\rm stably}  \longrightarrow   Z \left( \|\varphi \|^2_{L^2(\R)}\int_0^1 \int_{\R} \left( \int_{-c} ^c p_{1-s} (x-y) dx \right)^2 \delta_0(W(s,y)) dyds \right)^{\frac 12}.
\]
\item[(ii)] If $\frac R  {\sqrt{t_R}} \rightarrow 0$,  
\[
R^{-1} t_R^ {\frac 12} u(t_R)   \stackrel  {\rm stably}  \longrightarrow     Z \left( 2\|  \varphi \|^2_{L^2(\R)}\int_0^1 \int_{\R}  p^2_{1-s} (y)   \delta_0(W(s,y)) dyds \right)^{\frac 12}.
\]
\item[(iii)] If $\frac R  {\sqrt{t_R}} \rightarrow \infty$,  
\[
R ^{-\frac 12}  t_R^{\frac 14}  u(t_R)   \stackrel  {\rm stably}  \longrightarrow   Z \left(2 \|  \varphi \|^2_{L^2(\R)}\int_0^1 \int_{-1}^1  p^2_{1-s} (y)   \delta_0(W(s,y)) dyds \right)^{\frac 12}.
\]
\end{itemize}
 \end{theorem}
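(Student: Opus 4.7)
The plan is to follow the four-step template of Theorem \ref{thm2}, adapting the quadratic-variation / Ray--Knight argument to the spatially integrated setting. First, using the distributional identity $u_R(t_R)\stackrel{d}{=}\widetilde u_R(t_R)$ together with stochastic Fubini to interchange the $dx$ and $W(ds,dy)$ integrations, one writes
\[
\widetilde u_R(t_R)=t_R^{1/4}\int_0^1\int_\R \Phi_{R,t_R}(s,y)\,\varphi(t_R^{3/4}W(s,y))\,W(ds,dy),
\]
where $\Phi_{R,t}(s,y):=\int_{-R}^{R}p_{1-s}(x/\sqrt{t}-y)\,dx$. Multiplying by the appropriate normalising factor $c_R$ one obtains, in each case, a continuous martingale
\[
M_R(r):=c_R\,t_R^{1/4}\int_0^r\int_\R \Phi_{R,t_R}(s,y)\,\varphi(t_R^{3/4}W(s,y))\,W(ds,dy),\qquad r\in[0,1],
\]
whose law at $r=1$ must be identified.

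The core of the argument is to compute the limit of
\[
\langle M_R\rangle_r=c_R^{2}\,t_R^{1/2}\int_0^r\int_\R \Phi_{R,t_R}(s,y)^{2}\,\varphi^{2}(t_R^{3/4}W(s,y))\,dyds
\]
and show that it equals in $L^{1}(\Omega)$ the weighted local time inside the square root in the statement. With $x=\sqrt{t_R}\,\xi$ one rewrites $\Phi_{R,t_R}(s,y)=\sqrt{t_R}\int_{-R/\sqrt{t_R}}^{R/\sqrt{t_R}}p_{1-s}(\xi-y)\,d\xi$, and the three cases correspond to three limits of this weight: in case (i) the range of $\xi$ converges to $[-c,c]$, producing the weight $\bigl(\int_{-c}^{c}p_{1-s}(\xi-y)d\xi\bigr)^{2}$; in case (ii) the range collapses to a point and $\Phi_{R,t_R}\sim 2R\,p_{1-s}(y)$, producing the weight $p_{1-s}^{2}(y)$; in case (iii) the range exhausts $\R$, giving $\Phi_{R,t_R}\sim\sqrt{t_R}$ on $\{|y|<R/\sqrt{t_R}\}$, and the further substitution $y=(R/\sqrt{t_R})v$, combined with the self-similarity $W(s,\lambda y)\stackrel{d}{=}\sqrt{\lambda}\,\widetilde W(s,y)$, localises the problem to $|v|\le 1$. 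Once the correct weight is identified, the $L^{1}$ convergence follows by the argument of Step 1 of Theorem \ref{thm2}: approximate $\varphi^{2}$ by $\varphi^{2}*p_{\ep}$, apply the Fourier representation of the Gaussian, and pass to the limit via the estimate of Lemma \ref{lem1}, whose proof extends to any weight that is bounded on compacta and has Gaussian decay in $y$.

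Steps 2--4 of the proof of Theorem \ref{thm2} then transfer almost verbatim. Step 2, the vanishing of the joint quadratic variations $\langle M_R,M^{i}\rangle_r$ with the basis martingales $M^{i}(r)=\int_0^r\int e_i(y)W(ds,dy)$, uses only that $\varphi\in L^{p}(\R)$ with $p<2$ and the Gaussian decay of $\Phi_{R,t_R}$ in $y$, so the estimate of Step 2 of Theorem \ref{thm2} adapts with the same exponent. Step 3 applies Dambis--Dubins--Schwarz and the asymptotic Ray--Knight theorem of Pitman--Yor to deduce that $M_R(1)$ converges in law to $\beta_{0,L}$, where $L$ is the limit of $\langle M_R\rangle_1$ and $\beta_0$ is a Brownian motion independent of $W$. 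Step 4 is the conditional characteristic-function computation: conditioning on $\mathcal F_{t_0}$ and splitting $W(s,y)=\widehat W(s-t_0,y)+W(t_0,y)$, one checks that the contribution of $W(t_0,\sqrt{t_R}y)$ is negligible after rescaling, which upgrades convergence in law to the stable convergence stated and yields the required joint independence of the limit from $W$.

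The expected main obstacle is the identification of the limiting weighted local time in case (iii). Two distinct spatial scales for $y$ coexist there ($y=O(1)$ from the heat kernel versus $y=O(R/\sqrt{t_R})$ from the spatial interval), and the rescaling that removes the second simultaneously changes the amplitude of $W(s,y)$, producing an effective large parameter $a_R$ inside $\varphi$. One has to establish simultaneously that the rescaled weight $\Phi_{R,t_R}^{2}$ gives the indicator $\mathbf 1_{|v|\le 1}$ (with the residual $p_{1-s}^{2}$ factor inherited from the boundary layer) and that $a_R\,\varphi^{2}(a_R\widetilde W)\to \|\varphi\|_{L^{2}}^{2}\delta_0(\widetilde W)$ uniformly enough to justify joint passage to the limit; together with the truncated-weight analogue of Lemma \ref{lem1}, this produces the stated $\int_{-1}^{1}p_{1-s}^{2}(y)\delta_0(W(s,y))\,dyds$ local time.
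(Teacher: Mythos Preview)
Your proposal follows essentially the same route as the paper: in each regime one performs the appropriate change of variable in $x$ (and, for (iii), also in $y$) to reduce to a martingale whose quadratic variation is handled exactly as in Steps~1--4 of Theorem~\ref{thm2}, with the stable convergence obtained via the conditional characteristic-function argument. The only organisational difference is that you first apply stochastic Fubini to collapse the $dx$-integral into a single kernel $\Phi_{R,t_R}$, whereas the paper keeps the $\int_{-1}^{1}dx$ outside throughout; this is cosmetic.
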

 
 \begin{proof}  Let us prove first the case (i). We have, with the change of variable $\frac x{\sqrt{t_R}}  \rightarrow x $,
   \[
\widetilde{ u}_R(t_R)= t_R^{\frac  34}  \int_{-R/\sqrt{t_R}} ^{ R/\sqrt{t_R}} \int_0^1\int_{\R}  p_{1-s} (x-y)   \varphi( t_R^{\frac 34} W(s,y)) W(ds,dy) dx.
 \]
 Consider the family of martingales
 \[
 M_R(\cdot, x)= t_R^{\frac  38}  \int_{-R/\sqrt{t_R}} ^{ R/\sqrt{t_R}} \int_0^\cdot\int_{\R}  p_{1-s} (x-y)   \varphi( t_R^{\frac 34} W(s,y)) W(ds,dy) dx,
 \]
 $r\in [0,1]$.  We can write
 \[
\langle M_R(\cdot, x )\rangle_r= t_R^{\frac  34} \int_{ [ -\frac R{\sqrt{t_R}}, \frac R{\sqrt{t_R}} ]^2} \int_0^r\int_{\R}  p_{ 1-s} (x-y) p_{1-s} (x'-y) 
\varphi^2( t_R^{\frac 34} W(s,y))   dydsdx dx'.
\] 
Then,  as in the proof of Theorem  \ref{thm2}, we can show that $ \langle M_R(\cdot, x )\rangle_r$ converges in $L^1(\Omega)$ as $R\rightarrow \infty$ to the weighted local time
\[
\| \varphi \|^2_{L^2(\R)}\int_0^r \int_{\R} \left( \int_{-c} ^c p_{1-s} (x-y) dx \right)^2 \delta_0(W(s,y)) dyds.
\]
 This  completes the proof of (i). 
 
  If $\frac R  {\sqrt{t_R}} \rightarrow 0$,  with the change of variable  $x  \rightarrow Rx $, we can write
    \begin{equation} \label{eq2}
\widetilde{ u}_R(t_R)=R t_R^{\frac  1 4}  \int_{-1} ^{ 1} \int_0^1\int_{\R}  p_{1-s} (x\frac R{\sqrt{t_R}}-y)  \varphi( t_R^{\frac 34} W(s,y)) W(ds,dy) dx.
 \end{equation}
 As before, the stochastic integral
 \[
  t_R^{\frac  3 4}  \int_{-1} ^{ 1} \int_0^1\int_{\R}  p_{1-s} (x\frac R{\sqrt{t_R}}-y)  \varphi( t_R^{\frac 34} W(s,y)) W(ds,dy) dx
  \]
  converges in law to
  \[
  Z \left( 2\|  \varphi \|^2_{L^2(\R)}\int_0^1 \int_{\R}  p^2_{1-s} (y)   \delta_0(W(s,y)) dyds \right)^{\frac 12},
  \]
  which implies (ii).
To show (iii), we make the change of variable $y \rightarrow  y \frac R{\sqrt{t_R}}$ in  (\ref{eq2}) to get
\begin{align*}
\widetilde{ u}_R(t_R)&=R ^{\frac 32  }
\int_{-1} ^{ 1} \int_0^1\int_{\R}  p_{1-s} (\frac R{\sqrt{t_R}}(x-y))  \varphi(t_R^{\frac 34} W(s,y)) W(ds,dy) dx \\
& =R ^{\frac 12}  t_R^{\frac 12} 
\int_{-1} ^{ 1} \int_0^1\int_{\R}  p_{\frac {t_R}{R^2}(1-s)} (x-y)  \varphi(t_R^{\frac 34} W(s,y)) W(ds,dy) dx.
\end{align*}
Finally the stochastic integral
\[
t_R^{\frac 34}\int_{-1} ^{ 1}  \int_0^1\int_{\R}  p_{\frac {t_R}{R^2}(1-s)} (x-y)  \varphi(t_R^{\frac 34} W(s,y)) W(ds,dy) dx
\]
converges in  law as $R\rightarrow \infty$ to 
\[
  Z \left(2 \|  \varphi \|^2_{L^2(\R)}\int_0^1 \int_{-1}^1  p^2_{1-s} (y)   \delta_0(W(s,y)) dyds \right)^{\frac 12}.
\]
The stable character of the convergence can be proved by the same arguments, based on the conditional characteristic function, as in the proof of Theorem \ref{thm2}.
 \end{proof}

\section{Case of a nonlinear coefficient $\sigma$}
In this section we  discuss the case of  a nonlinear stochastic heat equation
\begin{equation} \label{ecu7}
\frac {\partial u }{\partial t} = \frac 12 \frac {\partial ^2u }{\partial x^2} + \sigma(u)  \frac { \partial ^2 W} {\partial t\partial x}, \quad x\in \RR, t\ge 0,
\end{equation}
with initial condition $u(0,z)=1$, where $\sigma: \R \rightarrow \R$ is a Lipschitz function.
 The mild solution to equation (\ref{ecu7}) is given by
\[
u(t,x)= 1+  \int_0^t \int_{\RR}  p_{t-s} (x-y) \sigma(u(s,y)) W(ds,dy).
\]
We are interested in the asymptotic behavior of $u(t,x)$ as $t$ tends to infinity. As before we consider different cases:

\medskip
\noindent
{\it Case 1.}  \quad Suppose that $\sigma(u)=u$. In this case, the solution has a Wiener chaos expansion given by
\begin{align*}
u(t,x) &=1+  \sum_{n\geq 1} \int_{\R^{n}} \int_{\Delta_n(t) } \prod_{i=0}^{n-1} p_{s_i - s_{i+1}} ( x_i - x_{i+1} ) \,  \, W(ds_1, dx_1)\cdots W(ds_n, dx_n) \\
& =: 1 + \sum_{n\geq 1} I_n(f_{t,x,n} ) \,,
\end{align*}
with 
\[
 f_{t,x,n}(s_1, \dots, s_n, x_1, \dots, x_n) =  {\bf 1}_{\Delta_n(t)}(s_1,\dots, s_n) \prod_{i=0}^{n-1} p_{s_i - s_{i+1}}( x_i - x_{i+1} ) 
 \]
 and $\Delta_n(t) =\{ (s_1, \dots, s_n): 0<s_1 < \cdots < s_n <t\}$.
Here $I_n$ denotes the multiple stochastic integral of order $n$ with respect to the noise $W$.
If we consider the projection of $u(t,x)$ on a fixed Wiener chaos, we can write 
with the change of variables $s_i \rightarrow ts_i$  and $x_i \rightarrow \sqrt{t }y_i$, 
\begin{align*}
 I_n(f_{t,x,n} )&=   \int_{\Delta_n(1) }\int_{\RR^n} p_{t(1-s_1)}(\frac x{\sqrt{t}} -x_1) \\
 & \qquad \times \prod_{i=1}^{n-1}  p_{t(s_i-s_{i+1})} (x_i-x_{i+1})    W(tds_1,\sqrt{t}dx_1) \cdots W(tds_n,\sqrt{t}dx_n) \\
&=t^{-\frac n2}   \int_{\Delta_n(1) }\int_{\RR^n} p_{1-s_1}(\frac x{\sqrt{t}} -x_1) \\
 & \qquad \times \prod_{i=1}^{n-1}  p_{s_i-s_{i+1}} (x_i-x_{i+1})    W(tds_1,\sqrt{t}dx_1) \cdots W(tds_n,\sqrt{t}dx_n).
\end{align*}
By the scaling properties of the two-parameter Wiener process it follows that $ I_n(f_{t,x,n} )$ has the same law as
\begin{align*}
\widetilde{I}_n(f_{t,x,n} ) &:=t^{\frac {3n}4}   \int_{\Delta_n(1) }\int_{\RR^n} p_{1-s_1}(\frac x{\sqrt{t}} -x_1) \\
 & \qquad \times \prod_{i=1}^{n-1}  p_{s_i-s_{i+1}} (x_i-x_{i+1})    W(ds_1,dx_1) \cdots W(ds_n,dx_n).
\end{align*}
As a consequence, $t^{-\frac 34 n}  I_n(f_{t,x,n} )$ converges stably to
\[
 \int_{\Delta_n(1) }\int_{\RR^n}    \prod_{i=0}^{n-1}  p_{s_i-s_{i+1}} (x_i-x_{i+1})   \widehat{W}(ds_1,dx_1) \cdots \widehat{W}(ds_n,dx_n),
\]
where $\widehat{W}$ is a two-parameter Wiener process independent of $W$ and
with the convention $s_0=1$ and $x_0=0$.

Notice that the rate of convergence depends on the order of the Wiener chaos. This is consistent with the asymptotic behavior of $\log u(t,x)$, when $u(0,x)=\delta_0(x)$,
obtained by Amir, Corwin and Quastel in \cite{AmCoQu}.

\medskip
\noindent
{\it Case 2.}  \quad  When $\sigma$ is a Lipschitz function that belongs to $L^2(\R)$, the problem is much more involved and we will give here just some ideas on how to proceed. We can write
\[
u(t,x)=1+ \frac 1{\sqrt{t}}  \int_0^1 \int_{\RR} p_{1-s}(\frac x{\sqrt{t}} -y)  \sigma(u( ts, \sqrt{t} y)) W(tds ,\sqrt{t}dy).
\]
Furthermore,
\begin{align*}
u( ts, \sqrt{t} y) &= 1+ \int_0^{ts} \int_{\RR} p_{ts-r} (\sqrt{y} -z) \sigma(u(r,z)) W(dr,dz) \\
&=1+ \frac 1 {\sqrt{t}} \int_0^s \int_{\RR} p_{s-r} (y-z) \sigma( u(tr, \sqrt{t} z)) W(t dr, \sqrt{t} dz).
\end{align*}
By the scaling properties of the two-parameter Wiener process, as a function of $\widehat{W}$, $u( ts, \sqrt{t} y)$ has the same law as
\[
v^t(s,y)= 1+ t^{\frac 14} \int_0^s \int_{\RR} p_{s-r} (y-z) \sigma( v^t(r,z))) \widehat{W}(dr, dz).
\]
Therefore, $u(t,x)$ has the same law as
\[
\widetilde{u}(t,x)= 1+ t^{\frac 14}  \int_0^1 \int_{\RR} p_{1-s}  (\frac x{\sqrt{t}} -y)  \sigma(v^t( s,   y)) \widehat{W}(ds ,dy).
\]
Then,
\[
t^{-\frac 16} \widetilde{u}(t,x)= t^{-\frac 16} + t^{\frac 1{12}}  \int_0^1 \int_{\RR} p_{1-s}  (\frac x{\sqrt{t}} -y)  \sigma(v^t( s,   y)) \widehat{W}(ds ,dy).
\]
The quadratic variation of the martingale part of the above stochastic integral is
\begin{align*}
& t^{\frac 16} \int_0^1 \int_{\RR} p^2_{1-s}  (\frac x{\sqrt{t}} -y)  \sigma^2(1+ t^{\frac 16} Z^t( s,   y)) dsdy \\
&=  \int_0^1 \int_{\RR} p^2_{1-s}  (\frac x{\sqrt{t}} -y)  \int_{\RR} \sigma^2(\xi) \delta_0(   Z^t( s,   y) + t^{-\frac 16} -\xi t^{-\frac 16})  d\xi dsdy 
\end{align*}
where $Z^t(s,y)$ satisfies
\[
Z^t( s,   y)= t^{-\frac 16} + t^{\frac 1{12}} \int_0^s \int_{\RR} p_{s-r} (y-z) \sigma( 1+ t^{\frac 16} Z^t(r,z))) \widehat{W}(dr, dz).
\]
From these computations, we conjecture that  $t^{-\frac 16}$  is the right normalization and the limit would satisfy an equation involving a weighted local time of the solution.  However, proving these facts is a challenging problem not to be treated in this paper.

\end{document}